\documentclass[a4paper,3p]{elsarticle}
\usepackage{times}
\usepackage{amsmath,amsfonts,amssymb,latexsym}
\usepackage{cleveref}
\usepackage{subcaption}
\usepackage{graphicx}
\usepackage{pgfplots}
\usepackage{booktabs}
\usepackage{array}
\usepackage{fancyvrb}
\usepackage{listings}
\pgfplotsset{compat=newest,
width=\linewidth,
height=\linewidth,
ylabel shift={-3pt},
xlabel shift={-3pt}
}
\usepackage{color}
\newtheorem{theorem}{Theorem}

\newenvironment{proof}[1][Proof]{\textbf{#1.} }{\ \rule{0.5em}{0.5em}}

\newcommand{\R}{\mathbb{R}}

\newcommand{\e}{{\rm e}}

\begin{document}

\begin{frontmatter}
\title{D-splitting methods: 2$N$-storage embedded explicit pseudo-geometric Runge-Kutta methods using splitting methods}

%\author{Sergio Blanes}\ead{serblaza@imm.upv.es}
%\address{Instituto Universitario de Matem\'{a}tica Multidisciplinar, %\\
%Universitat Polit\`{e}cnica de Val\`{e}ncia. Spain}

%Departament de Matem\`atiques and IMAC, Universitat Jaume I, 12071-Castell\'on, Spain
%email: alescori@uji.es

\author[imm]{Sergio Blanes}\ead{serblaza@imm.upv.es}
\address[imm]{Instituto Universitario de Matem\'{a}tica Multidisciplinar, %\\
Universitat Polit\`{e}cnica de Val\`{e}ncia. Spain}
%\cortext[cor1]{Corresponding author}

\author[uji]{Alejandro Escorihuela-Tom\`as}\ead{alescori@uji.es}
\address[uji]{Departament de Matem\`atiques and IMAC, Universitat Jaume I, 12071-Castell\'on, Spain}
%\cortext[cor1]{Corresponding author}

\begin{abstract}
	Low-storage explicit Runge–Kutta
	schemes are particularly popular for the numerical integration of time-dependent partial differential equations based on the method-of-lines due to their efficiency and their reduced memory requirements. We show that D-splitting methods, splitting methods on the duplicated phase space, can be used as high performance 2N-storage embedded explicit RK methods without a third storage register. They are pseudo-geometric methods preserving some of the qualitative properties of the exact solution up to a higher order than the order of the method. Some of their properties are analysed and new tailored methods are built whose high performance %new methods are proposed and 
	is tested on numerical examples.
	%, and we indicate how to build new tailored methods.
\end{abstract}
\begin{keyword}
Runge-Kutta methods;
2$N$-storage;
splitting methods;
extended phase space;
pseudo geometric schemes
%\MSC 65L07; 65L05; 65Z05
\end{keyword}
\end{frontmatter}

%\date{}
%\maketitle

\section{Introduction} \label{Introduction}

The numerical simulation of many dynamical systems {requires} the numerical integration of high-dimensional systems of differential equations. This is the case for example, in Computational Fluid Dynamics or in Quantum Mechanics where coupled systems of PDEs in several dimensions must be solved.
To achieve high accuracy, precise spatial discretization is generally required, which can involve up to millions of mesh points. If, for example, the method of lines is used to advance in time, one has to numerically solve a system of up to millions of coupled ODEs
%To reach high accuracy usually needs an accurate spatial discretisation which can involve up to millions of grid points. If, for example, the method of lines is used to advance in time, a system of up to millions of coupled ODEs needs to be numerically solved
\begin{equation}\label{eq:ODE1}
	x^{\prime}= f(x),
	\qquad
	x(t_0)=x_{0}\in\mathbb{C}^N,
\end{equation}
with $'\equiv \frac{d}{dt}$ and $N\gg 1$ (to simplify the presentation, we consider the autonomous problem and the extension to the non-autonomous case is left at the conclusions).

 First- and second-order methods in time are the most frequently used methods. However, when a high accuracy is  required (achieving at least similar accuracy in time integration to space integration), higher-order methods for solving ODEs are generally more efficient.
 The performance of a method for solving ODEs is usually measured by the accuracy achieved relative to its computational cost. For non-stiff problems, explicit methods are often used instead of implicit ones, as they are faster and simpler to implement in a code. 
 The computational cost is usually measured as the number of evaluations of the vector field, $f(x)$, at each step (or any quantity proportional to this number). However, for very high-dimensional problems, storage requirements are significant, so a combination of high accuracy and low memory consumption is sought.
For this reason, explicit low-storage Runge-Kutta (RK) methods have been developed \cite{bazavov252sr,calvo04anm,carpenter94fo2,higueras25lse,ketcheson10rkm,kennedy00lse,williamson80lsr} and frequently used in the literature \cite{calvo04anm,kennedy00lse}. Methods up to the fifth order have been obtained showing a good performance. However, they are obtained by solving a relatively complex set of order conditions, which makes constructing high order methods and analyzing them a difficult task.
  
 In this {paper}, we present a novel procedure to build, in a very simple way, 2$N$-storage RK methods at any order. They correspond to splitting methods for general separable systems in a duplicated phase space, D-splitting methods, which is very simple to implement. There are many highly efficient splitting methods in the literature at any order, where some of them appear on libraries \cite{moayeri26tat}, which can be used as partly optimised 2$N$-storage RK methods, and new tailored ones can also be built. 
 
 Frequently, in practice, the vector field has an algebraic structure that imposes to the solution some geometric properties which are very important to be preserved,  like in classical or quantum Hamiltonian systems. In those cases, geometric integrators have shown to be highly efficient \cite{blanes25aci,hairer06gni,iserles08afc,sanz-serna94nhp}. We prove in the linear case that the new D-splitting methods are pseudo-Lie group integrators, preserving the Lie-group structure up to a higher order than the order of the method as already shown in \cite{mcLachlan25rkm} for classical Hamiltonian systems so, they can also be used as pseudo-geometric methods. 
 Finally, we present a backward error analysis which allows to build new tailored splitting methods with a reduced number of stages.
 % to reach a given order.
 %, and  new optimised methods are obtained.

\section{Runge-Kutta and 2$N$-storage methods}

An explicit $s$-stage Runge-Kutta method to advance a time step, $h$, can be written as \cite{butcher08nmf,hairer93sod}
{
\begin{eqnarray}
	{k}_{1} &=& f(x_{n}), \nonumber \\
	{k}_{i} &=& f({x}_n + h( a_{i1}k_{1}+\ldots+ a_{i,i-1}k_{i-1})) ,  \qquad i=2,\ldots,s,  %\label{eq.RK1}
	 \nonumber \\
%	{k}_{i} &=& f({x}_n + h\sum_{j=1}^{i-1} a_{ij}k_{j}) , \qquad i=2,\ldots,s, \label{eq.RK}  \label{eq.RK1} \\
	{x}_{n+1} &=& {x}_n +h( b_{1}k_{1}+\ldots+b_{s}k_{s})  , \label{eq.RK2}
%	{x}_{n+1} &=& {x}_n +h\sum_{j=1}^{s} b_{j}k_{j}  , \label{eq.RK2}
\end{eqnarray}
}
with $x_n\simeq x(t_n), \ t_n=t_0+nh$. Embedded methods incorporate another set of coefficients, $\hat b_i$ such that $\hat{x}_{n+1}={x}_n+h\sum_{j=1}^{s} \hat b_{j}k_{j}$ provides a second approximation of lower order to the solution, allowing to estimate the local error to adjust the time step. We say an $s$-stage method is of order $p$ if $x_1=x(h)+{\cal O}(h^{p+1})$ and is denoted as $(s,p)$.
%, where the coefficients $a_{ij},b_i$ have to satisfy a set of order conditions.

One step involves $s$ evaluations of the vector field, but also the $s$ vectors $k_i, \ i=1,2,\ldots,s$ have to be stored in memory since they are required in \eqref{eq.RK1} and \eqref{eq.RK2} when the coefficients $a_{ij}, b_i$ are arbitrary. 
However, it is possible to reduce the number of vectors to be stored in memory by adding appropriate conditions to the coefficients. The minimum number of storage locations per step is given by a class of RK methods introduced by Williamson which are referred as 2$N$-storage methods of Williamson type (W),
%. The algorithm to compute one step is the following 
%\begin{eqnarray}
%	\Delta y_1 &=& hf(x_{n}), \nonumber \\
%	{y}_{1} &=& {x}_{n} + B_1\Delta y_1,  \nonumber  \\
%	\Delta y_i &=& A_i \Delta y_{i-1} + h f(y_{i-1}) , \nonumber\\
%{y}_{i} &=& {y}_{i-1} + B_i\Delta y_i   , \qquad\qquad\qquad i=2,\ldots,s, %\label{eq.2N_RK1}
%\end{eqnarray}
%and $x_{n+1}=y_s$,
which keeps only two vectors in memory and depends on $2s-1$ parameters, $A_i,B_i, \ i=1,2,\ldots,s$ with $A_1=0$. This procedure to build 2$N$-storage methods is not unique, e.g. the scheme in the two-register van der Houwen (vdH) format \cite{houwen77coi}, which are both given by 
 ($y_0=x_n$)
%\begin{eqnarray}
%	k_1 &=& hf(y_{0}), \nonumber \\
%	{y}_{1} &=& {x}_{n} + hb_1 k_1,  \nonumber  \\
%	k_i &=&  h f(y_{i-2}+ha_{i,i-1}k_{i-1}),  \nonumber\\
%	{y}_{i} &=& {y}_{i-1} + hb_i k_i   , \qquad\qquad\qquad i=2,\ldots,s, \label{eq.2N_RK_vdH}
%\end{eqnarray}
\begin{equation}\label{eq.2N_RK_W_vdH}
	(W):\left\{
	\begin{array}{rcl}
		\Delta y_1 &=& hf(y_{0}), \nonumber \\
		{y}_{1} &=& {x}_{n} + B_1\Delta y_1,  \nonumber  \\
		\Delta y_i &=& A_i \Delta y_{i-1} + h f(y_{i-1}) , \nonumber\\
		{y}_{i} &=& {y}_{i-1} + B_i\Delta y_i   , \qquad\quad i=2,\ldots,s, %\label{eq.2N_RK1}
	\end{array}
	\right.
	(vdH):\left\{
	\begin{array}{rcl}
		k_1 &=& f(y_{0}), \nonumber \\
		{y}_{1} &=& {x}_{n} + hb_1 k_1,  \nonumber  \\
		k_i &=&  f(y_{i-2}+ha_{i,i-1}k_{i-1}),  \nonumber\\
		{y}_{i} &=& {y}_{i-1} + hb_i k_i   , \qquad\qquad i=2,\ldots,s, 
	\end{array}
	\right.
\end{equation}
and $x_{n+1}=y_s$. The following pseudo-codes show that these schemes require to store only two registers per step
%($Y=x_n$)\
 \\
%\begin{eqnarray}
%	k_1 &=& hf(y_{0}), \nonumber \\
%	{y}_{1} &=& {x}_{n} + hb_1 k_1,  \nonumber  \\
%	k_i &=&  h f(y_{i-2}+ha_{i,i-1}k_{i-1}),  \nonumber\\
%	{y}_{i} &=& {y}_{i-1} + hb_i k_i   , \qquad\qquad\qquad i=2,\ldots,s, \label{eq.2N_RK_vdH}
%\end{eqnarray}
% \begin{equation}\label{eq.2N_RK_W_vdH}
% 	(W):\left\{
% 	\begin{array}{rcl}
% 		\Delta y_1 &=& hf(y_{0}), \nonumber \\
% 		{y}_{1} &=& {x}_{n} + B_1\Delta y_1,  \nonumber  \\
% 		\Delta y_i &=& A_i \Delta y_{i-1} + h f(y_{i-1}) , \nonumber\\
% 		{y}_{i} &=& {y}_{i-1} + B_i\Delta y_i   , \quad\quad i=2,\ldots,s, %\label{eq.2N_RK1}
% 	\end{array}
% 	\right.
% 	(vdH):\left\{
% 	\begin{array}{rcl}
% 		k_1 &=& hf(y_{0}), \nonumber \\
% 		{y}_{1} &=& {x}_{n} + hb_1 k_1,  \nonumber  \\
% 		k_i &=&  h f(y_{i-2}+ha_{i,i-1}k_{i-1}),  \nonumber\\
% 		{y}_{i} &=& {y}_{i-1} + hb_i k_i   , \quad\qquad i=2,\ldots,s, 
% 	\end{array}
% 	\right.
%       \end{equation}

\begin{minipage}[c]{0.45\textwidth}
\centering
(W)
\begin{lstlisting}
        $\Delta Y$ := $0$
        $Y$ := $x_n$
        for i=1:s do
          $\Delta Y$ := $A_i\Delta Y +hf(Y)$
          $Y$ := $Y +B_i\Delta Y$
        end
        $x_{n+1}$ := $Y$
\end{lstlisting}
\end{minipage}
\hfill
\begin{minipage}[c]{0.45\textwidth}
\centering
(vdH)
\begin{lstlisting}
        $K$ := $f(Y)$
        $Y$:= $x_n + hb_1 K$
        for i=2:s do
          $K$ := $Y+(a_{i,i-1}-b_{i-1})hK$
          $K$ := $f(K)$
          $Y$ := $Y + hb_iK$
        end  
        $x_{n+1}$ := $Y$
\end{lstlisting}
\end{minipage}
%and $x_{n+1}=Y$. 
The vdH schemes is usually referred as 2R methods, which corresponds to the scheme \eqref{eq.RK1}-\eqref{eq.RK2} when $a_{i,j}=b_j, \ i=j+1,j+2,\ldots,s$. Several 2R embedded methods of order three, four and five are given in \cite{kennedy00lse}. 
Embedded methods are provided, but then a third storage register is required, increasing the storage cost.

\section{D-splitting methods: Splitting methods in the duplicated phase space}

%\subsection{Brief review on splitting methods}

Suppose the vector field, $f(x)$, can be decomposed into a sum of two  contributions,   $f(x) =  f^{[1]}(x) + f^{[2]}(x)$, in such a way that each sub-problem 
${x}' = f^{[i]}(x),  \quad i = 1, 2, $
%\[
%{x}' = f^{[i]}(x),  \qquad \qquad i = 1, 2
%\] 
can be integrated exactly 
%(or, more generally, it is simpler to integrate than the original system), 
with solutions $x(h) = \varphi_h^{[i]}(x_0)$ at $t =t_0+ h$. Then, a consistent composition of these flows  provides approximations to the exact solution. %\cite{blanes25aci,mclachlan02sm}. 
For example, a symmetric second-order method is given by
\begin{equation}   \label{eq.2.spt}
	S_h^{[2]} =
	\varphi_{\frac{h}2}^{[1]} \circ \varphi_h^{[2]} \circ	\varphi_{\frac{h}2}^{[1]}, 
	\qquad \mbox{i.e.} \qquad 
	x_1= \varphi_{\frac{h}2}^{[1]} \Big(\varphi_h^{[2]} \big(\varphi_{\frac{h}2}^{[1]}(x_0)\big)\Big).
\end{equation}
{If we define the Lie operators $\hat A^{[i]}\equiv f^{[i]}(x)\cdot \nabla, \ i=1,2,$ then the exact solution is formally given by $x(t_{n}+h)=\phi_h(x(t_n))$, with $\phi_h=\e^{h(\hat A^{[1]}+\hat A^{[2]})}$ and the method can be written as
%\begin{equation}   \label{eq.Strang}
%	S_h^{[2]} =\e^{\frac{h}2\hat A^{[1]}}\e^{h\hat A^{[2]}}\e^{\frac{h}2\hat A^{[1]}} = 
%\e^{h(\hat A^{[1]}+\hat A^{[2]} + h^2 \hat E_2 + h^4\blau{\hat{E}_4}+\cdots)}
%\end{equation} 
%	($\hat E_k$ are error terms that depend on {nested} commutators of the Lie operators $\hat A^{[i]}$) where the error can be expanded in even powers of $h$ and it is time-symmetric.
%Similarly, an $s$-stage splitting method of order $p$ is given by the composition
%\begin{equation}   \label{eq.Splitting}
%  \Psi_h^{[p]} =
%\e^{hb_s \hat A^{[2]}} \e^{ha_s \hat A^{[1]}}
%\cdots
%\e^{hb_1 \hat A^{[2]}} \e^{ha_1\hat A^{[1]}}
% = 
%\exp\left(h(\hat A^{[1]}+\hat A^{[2]}) + h^{p+1} \hat E_{p+1} + {\cal O}(h^{p+2}))\right).
%\end{equation}
{
\begin{equation}   \label{eq.Strang}
	S_h^{[2]} =\e^{\frac{h}2\hat A^{[1]}}\e^{h\hat A^{[2]}}\e^{\frac{h}2\hat A^{[1]}} = 
\e^{h(\hat A^{[1]}+\hat A^{[2]}) + h^3 \hat E_3 + h^5{\hat{E}_5}+\cdots}
\end{equation}
where the error can be expanded in odd powers of $h$, and $\hat{E}_k$ are error terms
that depend on nested commutators of $\hat{A}^{[i]}$.
In general, an $s$-stage splitting method of order $p$ is given by the composition  
\begin{equation}   \label{eq.Splitting}
  \Psi_h^{[p]} =
% \e^{ha_{s+1} \hat A^{[1]}}
\e^{hb_s \hat A^{[2]}} \e^{ha_s \hat A^{[1]}}
\cdots
\e^{hb_1 \hat A^{[2]}} \e^{ha_1\hat A^{[1]}}
 = 
\exp\left(h(\hat A^{[1]}+\hat A^{[2]}) + h^{p+1} \hat E_{p+1} + {\cal O}(h^{p+2})\right),
\end{equation}
%where, if the integrator satisfies $\Psi_h^{[p]}\circ\Psi_{-h}^{[p]}=\mathrm{Id}$, with $\mathrm{Id}$ denoting the identity flow, then it is a time-symmetric integrator. 
where if it is time-symmetric
\[
(a_s=0, \quad a_i=a_{s-i}, \quad b_i=b_{s+1-i}) 
\qquad \mbox{or} \qquad
(a_1=0, \quad a_{i+1}=a_{s+1-i}, \quad b_i=b_{s+1-i})
, 
\qquad i=1,2,\ldots
\]
%
% ($b_s=0, a_i=a_{s+1-i}, b_i=b_{s-i}, \ i=1,2,\ldots$) or ($a_1=0, a_{i+1}=a_{s+1-i}, b_i=b_{s+1-i}, \ i=1,2,\ldots$),
only odd powers of $h$ appears in its series expansion, as in (\ref{eq.Strang}). The scheme is usually referred as an $s$-stage method since it involves $s$ evaluations of the maps $ \e^{ha_{i} \hat A^{[1]}}$ and $\e^{hb_i \hat A^{[2]}}, \ i=1, \ldots,s $. However, if $b_s=0$ or $a_1=0$  the last map is not counted in the cost because it can be concatenated with the fist map in the following step due to the First Same As Last (FSAL) property, taking into account that $ \e^{ha_{i} \hat A^{[1]}} \e^{ha_{j} \hat A^{[1]}}= \e^{h(a_{i}+a_j) \hat A^{[2]}}$ and similarly for $\hat A^{[1]}$, and the scheme is referred as an $(s-1)$-stage method. 
  }

\subsection{Working in the duplicated phase space}
%\paragraph{Working in the extended phase space}

%To build high-order methods either by composition or by extrapolation it is convenient to start from a symmetric method, usually an explicit symmetric second order scheme. For this reason, many different schemes have been developed tailored for particular classes of problems in order to get explicit symmetric second order methods.
%In \cite{blanes19otc} it was shown that most of these method can be easily obtained using s simple splitting method on an extended phase space, an idea that has been used many times in the literature.
{
When the problem cannot be split into subproblems that can be integrated exactly,
one can instead duplicate the space and then split the problem.
To this end,}  
to solve \eqref{eq:ODE1} is consistent with the differential equation \cite{hairer93sod}
\begin{equation} \label{eq.D_Split}
	\begin{array}{l}
		u'= f(v), \quad \quad \\
		v'= f(u), \qquad 
	\end{array} \mbox{or \ equivalently} \quad \quad 	\frac{d}{dt} \left(
\begin{array}{c} u \\  v  \end{array} \right)=
\left(\begin{array}{c} 0 \\ f(u)  \end{array} \right)	
+
\left(\begin{array}{c} f(v) \\  0 \end{array} \right) 
=
f^{[1]}( u) + f^{[2]}(v),
\end{equation}
$u(t_0)=v(t_0)=x_0$,
%\begin{equation} \label{eq.3.StrangNonAut2}
%	\begin{array}{ll}
%		u'= f(v), \qquad &u(t_0)=x_0\\
%		v'= f(u), \qquad &v(t_0)=x_0.
%	\end{array}
%\end{equation}
%or, equivalently \cite{blanes19otc}
%\begin{equation}\label{eq.3.ODE}
%	\frac{d}{dt} \left(
%	\begin{array}{c} u \\  v  \end{array} \right)=
%	\left(\begin{array}{c} 0 \\ f(u)  \end{array} \right)	
%	+
%	\left(\begin{array}{c} f(v) \\  0 \end{array} \right) 
%	=
%	f^{[1]}( u) + f^{[2]}(v), \qquad
%	\left(
%	\begin{array}{c} u(t_0) \\  v(t_0) \end{array} \right)=	
%	\left(
%	\begin{array}{c} x_0  \\ x_0  \end{array} \right),
%\end{equation}
whose solution is $u(t)=v(t)=x(t)$.
In \cite{blanes19otc} it was shown that most explicit symmetric methods from the literature (used to build new high-order methods by composition or by extrapolation) can be obtained using a basic splitting method on this extended phase space. %, and that we can write as 
This system is separable into two trivially solvable autonomous parts and splitting methods can be applied. 
The scheme \eqref{eq.2.spt} applied to solve \eqref{eq.D_Split}  corresponds to the sequence ($u_0=v_0=x_n$)
{
\begin{equation*}\label{eq.2N_Strang}
	\begin{array}{rcl}
	v_{1/2}	&=&	v_0+\frac{h}2f(u_0),
	\\
	u_{1}	&=& u_0+hf(v_{1/2}),\\
	v_{1}	&=& v_{1/2}+\frac{h}2f(u_{1}).
\end{array}
\end{equation*}
}
%\blau
{Since the duplicated vector $(u_1,v_1)^\top$ constitutes a symmetric second-order numerical approximation, both $u_1$ and $v_1$ provide symmetric second-order approximations to the exact solution $x(t_n+h)$.}
%Both, $u_{1}$ and  $v_{1}$ are symmetric second order approximations to the solution, $x(t_n+h)$, and
An improved result is usually obtained with the average, $x_{n+1}=(u_{1}+v_{1})/2. $
%\[
%  x_{n+1}=\frac12(u_{1}+v_{1}).
%\]
Obviously, the algorithm to build a D-splitting method from an $s$-stage splitting method 
 is %($U=V=x_n$)
% \begin{equation}\label{eq:2N_Splitting}
% 	\begin{array}{rcl}
% 	v_{i}	&=&	v_{i-1}+ha_if(u_{i-1}),
% 		\\
% 	u_{i}	&=& u_{i-1}+hb_if(v_{i}), 
% 	\qquad	\qquad i=1,2,\ldots,s \\
% 	x_{n+1} &=& \frac12(u_{s}+v_{s}),
% 	\end{array}
% \end{equation}
 \begin{equation}\label{eq:2N_Splitting}
  \begin{minipage}[c]{0.4\textwidth}
\centering
\[ 
(D-splitting): \left\{ 	\begin{array}{rcl}
	    v_{i}	&=&	v_{i-1}+ha_if(u_{i-1}),
	 	\\
	 	u_{i}	&=& u_{i-1}+hb_if(v_{i}), 
	 \\ & & 	\quad	i=1,2,\ldots,s \\
	 	x_{n+1} &=& \frac12(u_{s}+v_{s}),
	 	\end{array}
	 	\right.
	 	\]
\end{minipage}
\hfill
  \begin{minipage}[c]{0.4\textwidth}
	\centering
%	(D-splitting)
	\begin{lstlisting}
		  $V$ := $x_n$
		  $U$ := $x_n$
		  for i=1:s do
		    $V$ := $V + ha_if(U)$
		    $U$ := $U + hb_if(V)$
		  end
		  $x_{n+1}$:=$\frac{1}{2}(U+V)$  
	\end{lstlisting}
\end{minipage}
\end{equation}      
which corresponds to the following explicit RK method ($u_0=v_0=x_n$)
%(the coefficients $c_i^{[a]}=\sum_{j=1}^{i}a_j,c_i^{[b]}=\sum_{j=1}^{i}b_j$ are, as we will see, for non-autonomous problems where the time is also considered as two different coordinates \cite{blanes19otc}) 
\begin{equation}    \label{RK-tablero}
%	\begin{array}{rcl}
%	v_{i}	&=&	v_{i-1}+ha_if(u_{i-1}),
%	\\
%	u_{i}	&=& u_{i-1}+hb_if(v_{i}), \\
%		& & 	i=1,2,\ldots,s \\
%	x_{n+1} &=& \frac12(u_{s}+v_{s}), \qquad
%\end{array} \quad
{
	\begin{array}{c|cccccccccc}
%		 &   &  &  &  &  &   &  &  &  &  \\
		c_1^{[a]} &   a_{1} &  &  &  &  &   &  &    &  & \\
		c_1^{[b]} &   0 &b_{1} &  &  &    &  &  &   &  & \\
c_2^{[a]} &   a_{1} & 0 &  a_{2} &  &   &   &  &   &  & \\
c_2^{[b]} &   0 &b_{1} &  0 & b_{2} &    &  &  &   &  & \\
c_3^{[a]} &   a_{1} & 0 &  a_{2} &   0 &  a_{3}    &  &  &  &  &  \\
c_3^{[b]} &   0 & b_{1} &  0 & b_{2} &  0 & b_{3}    &  &     &  &  \\
		\vdots & \vdots &  &           &   &  \ddots   &  & \ddots &  &  &  \\
		\vdots & \vdots &  &           &   &     &\ddots  &  & \ddots &  &%\text{\blau{llevem aquesta línia?}}  
		\\
c_s^{[a]}=1 | (v_s) &  a_1 &  0 & a_2 &   \ldots  &   &  &  a_{s-1}  &   0 & a_{s}  & \\
		\hline
c_s^{[b]}=1 | (u_s)	& 0 & b_1 &  0 & b_2 &  \ldots &     &  &b_{s-1}  & 0 &  b_s  
 \\
%\hline (v_s) &  a_1 &  0 & a_2 & 0 &   \ldots   &  &  a_{s-1}  &   0 & a_{s}  & 0 \\
\hline
 \frac12(u_s+v_s) &  \frac12 a_1 &  \frac12 b_1 & \frac12 a_2 &  \frac12 b_2  &  \ldots   &  &  \frac12 a_{s-1}  & \frac12 b_{s-1}   & \frac12 a_{s}  &  \frac12 b_s 		\end{array}
}
\end{equation}
 (the coefficients $c_i^{[a]}=\sum_{j=1}^{i}a_j,c_i^{[b]}=\sum_{j=1}^{i}b_j$ are, as we will see, required for non-autonomous problems where the time is also considered as two different coordinates \cite{blanes19otc}).
Given an splitting method of order $p$ then $u_s$ and $v_s$ are approximations to order $p$ and, obviously, this is also the case for the average, $(u_s+v_s)/2$. 
This scheme corresponds to an embedded $2s$-stage method of order $p$, i.e. an embedded  $(2s,p)$ RK method (if $a_1=0$ or $b_s=0$ the number of stages reduces to $2s-1$, instead of $2s-2$, because the FSAL property does not apply in this case).
% Once $u_s,v_s$ are obtained, one can compute $u_s:=u_s-v_s$ and to use $\|u_s\|$ as an estimation of the error to chose the following time step. If it is accepted, we take $x_{n+1}=v_s+\frac12 u_s$, otherwise if  $\|u_s\|$ is too large and the step is rejected, one can recover $x_n$ by first  evaluating $u_s:=u_s+v_s$ and then integrating backward with the inverse of the method (the same algorithm but in the oposite order and changing the sign of the coefficients). If rejections occur only occasionally, this is more economical than keeping a third storage register as in the 2R methods.

Since there are splitting methods to any order then they can be used as 2$N$-storage explicit RK method at any order. 
In addition, if the splitting method is symmetric, 
%i.e.
%\[
% (a_s=0, \quad a_i=a_{s-i}, \quad b_i=b_{s+1-i}) 
% \qquad \mbox{or} \qquad
% (a_1=0, \quad a_{i+1}=a_{s+1-i}, \quad b_i=b_{s+1-i})
% ,  \qquad i=1,2,\ldots
%\]
then $u_s$ and $v_s$ are symmetric schemes.
%, as well as $(u_s+v_s)/2$, and can also be used to obtain higher order methods by composition or extrapolation \cite{blanes19otc}.
Then, one can use the already existing optimised splitting methods from the literature \cite{mcLachlan02sm,yoshida90coh} (see also \cite{blanes24smf} and references there in).
% as well as methods using the processing technique \cite{blanes24foe,blanes99siw} (as far as the numerical solution is not requiered at each step).
%
%An $s$ stage splitting method of order $p$ requires then $2s$ evaluations of the vector field per step. On the other hand, two different solutions of order $p$ are obtained whose difference can be used as an error estimation. In addition, the average, in general, improves considerably the accuracy. 
At order $p=4$ there are methods with $s=4$ (3-stage symmetric compositions), but better results are obtained with the optimised method (BM4) 
%with $s=7$
%, and at order $p=6$ with $s=11$ 
\cite{blanes02psp}
{%\small
\begin{equation} \label{eq:coefsBM4}
\begin{array}{llll}
	a_1=0.07920369643119565, &
	a_2=0.353172906049774, &
	a_3=-0.04206508035771952, & a_4=1-2(a_1+a_2+a_3),\\
	b_1=0.209515106613362, &
	b_2=-0.143851773179818, &   b_3=\frac12-(b_1+b_2), 
\end{array}
\end{equation}
}
with $s=7, \ b_7=0, 
%a_4=1-2(a_1+a_2+a_3), \ b_3=\frac12-(b_1+b_2)
 \ a_{8-i}=a_i, b_{7-i}=b_i, \ i=1,2,3$, referred as a 6-stage symmetric splitting method \cite{blanes02psp} which, if used as a D-splitting method requires $2s-1=13$ evaluations per step, and the corresponding method is denoted as D$_{13}4$.%,  and
%   at order $p=6$
% {\small
% \[
% \begin{array}{llll}
% 	a_1=0.05026276440039223, &
% 	a_2=0.413514300428344, &
% 	a_3=0.04507988979439766, &
% 	a_4=-0.188054853819569, \\
% 	a_5=0.541960678450780, &  a_6=1-2(a_1+\ldots+a_5),
% 	&	& b_5=\frac12-(b_1+b_2+b_3+b_4), \\
% 	b_1=0.148816447901042, &
% 	b_2=-0.132385865767784, & 
% 	b_3=0.06730760469218501, & 	b_4=0.432666402578175, 
% \end{array}
% \]
% }
% with $s=11, \ b_{11}=0, \  \ a_{12-i}=a_i, b_{11-i}=b_i, \ i=1,2,3,4,5$, and $2s-1=21$ evaluations per step  (BM6)

\subsection{2$N$-storage splitting methods with a reduced number of stages}

While if $u_s$ and $v_s$ are of order $p$ implies that $(u_s+v_s)/2$ is also  of order $p$, the opposite is not necessarily true and it is possible that  $(u_s+v_s)/2$ is of higher order than $u_s$ and $v_s$. The simplest example is the Lie-Trotter method which corresponds to ($u_0=v_0=x_n$)
{
\begin{equation*}\label{eq:2N_Splitting}
	\begin{array}{rcl}
		v_{1}	&=&	v_{0}+hf(u_{0}),
		\\
		u_{1}	&=& u_{0}+hf(v_{1}),  \\
		x_{n+1} &=& \frac12(u_{1}+v_{1}) \ =x_n+\frac{h}2(f(x_n)+f(x_n+hf(x_n)))
	\end{array}
\end{equation*}
}
where both $u_1$ and $v_1$ are only first order approximations but the average is a well known second order RK method,
%\[
%x_{n+1}=x_n+\frac{h}2(f(x_n)+f(x_n+hf(x_n))),
%\] 
and this can also happen to higher orders.
The Lie operators associated to the vector fields $ f^{[1]}$ and $f^{[2]}$ are both closely related and some cancellations can occur when the average is taken, which reduces the number of order conditions with respect to standard splitting methods.
For example, given a splitting method of order $p$,
if
% with composition \eqref{eq.Splitting} then, if
{
\begin{equation}   \label{eq.symmetry}
\hat E_{p+1}({\hat{A}}^{[1]},\hat A^{[2]}) = 
-\hat E_{p+1}({\hat{A}}^{[2]},\hat A^{[1]})
\end{equation}
}
%\blau{
%\begin{equation}   \label{eq.symmetry}
%\hat E_{p+1}(\blau{\hat{A}}^{[1]},\hat A^{[2]}) = 
%\hat E_{p+1}(\blau{\hat{A}}^{[2]},\hat A^{[1]})+\mathcal{O}(h^{p+1})
%\end{equation}
%  }
the average cancels the contribution of the error at order $p$ and increases the order of the method in one unit. If the scheme is symmetric, this increment is of two orders. 
For instance, the composition
\begin{equation*}   \label{eq.Spl24}
%	\Psi_h^{[2]} =
	\e^{ha\hat A^{[1]}}
	\e^{h\frac12 \hat A^{[2]}} 
	\e^{h(1-2a) \hat A^{[1]}}
	\e^{h\frac12 \hat A^{[2]}} 
	\e^{ha\hat A^{[1]}}
	=
	\exp\left(h(\hat A^{[1]}+ \hat A^{[2]})+h^3\left(
\frac{6a^2-6a+1}{12}\hat E_{3,1}+\frac{6a-1}{24}\hat E_{3,2}	\right)+{\cal O}(h^5)
	\right)
\end{equation*}
with $\hat E_{3,1}=[\hat A^{[1]},[\hat A^{[1]},\hat A^{[2]}]], \hat E_{3,2}=[\hat A^{[2]},[\hat A^{[2]},\hat A^{[1]}]] $,
satisfies condition \eqref{eq.symmetry} for $p=2$ when the coefficient $a$ satisfies
%$\frac{6a^2-6a+1}{12}=-\frac{6a-1}{24} $, 
$\frac1{12}(6a^2-6a+1)=-\frac1{24}(6a-1) $, 
or
 $12a^2-6a+1=0$, i.e. if $a=\frac1{12}(3\pm i\sqrt{3})$ the scheme is a second order method which {turns} into a 4th-order method after the average is done. 
%
% The Lie algebra generated by the operators $\hat A^{[1]}$ and $\hat A^{[2]}$ is not a free Lie algebra and several cancellations can occur which deserves a further investigation. We have also observed that on Hamiltonian systems, the order of the pseudosymplecticity can be reduced when considering schemes where $u_s, v_s$ are of lower order than their average, and this also deserves further investigation and the results will be published elsewhere.
%
% As an illustration, we present a 4th-order symmetric splitting method with $s=7$ satisfying \eqref{eq.symmetry} for $p=4$ so, 
% after the average it turns into a 6th-order method (2N-S6) ($b_7=0, 
% %a_4=1-2(a_1+a_2+a_3), \ b_3=\frac12-(b_1+b_2)
% \ a_{8-i}=a_i, b_{7-i}=b_i, \ i=1,2,3$):
% {\small
% 	\[
% 	\begin{array}{llll}
% 		a_1=0.34117711626608893, &
% 		a_2=-0.11556397880852943, &
% 		a_3=0.0091007844006896624, & a_4=1-2(a_1+a_2+a_3),\\
% 		b_1=-0.19048598865349396, &
% 		b_2=-0.43215518907354579, &   b_3=\frac12-(b_1+b_2).
% 	\end{array}
% 	\]
% }
 %\blau
 {
 Furthermore, the associated Lie algebra framework allows us to isolate the error contributions of the linear part of the problem from those corresponding to the nonlinear component. This decoupling enables the construction of methods of order $p$ that achieve a higher order $q > p$ when applied to purely linear problems. As an illustrative example, we introduce a new method  with $s=7$ stages (matching the number of evaluations of the BM4 scheme). 
 
 In the composition \eqref{eq.Splitting} we have that the leading order term, $\hat E_{5}$, contains six indepednet elements of the Lie algebra that we can write as
 \[
  \hat E_{5} = \alpha_1[11112]+\alpha_2[21112]+\alpha_3[11212]+\alpha_4[22121]+\alpha_5[12221]+\alpha_6[22221]
 \]
 where $[11112]\equiv [\hat A^{[1]},[\hat A^{[1]},[\hat A^{[1]},[\hat A^{[1]},\hat A^{[2]}]]]]$, etc. It is easy to check that if the vector field, $f(x)$, is at most linear in $x$, i.e. $f(x)=Hx+b$ (there are tailored RK methods for this problem \cite{montijano23erk,zingg99rkm}), then
  \[
 [11112]=[21112]=[12221]=[22221]=0.
 \]

 In this design, the free parameters are exploited to attain $\alpha_3=\alpha_4=0$ which leads to sixth-order accuracy for linear problems, and the remaining free parameter is used to minimise the quantity, ${\cal E}_4=|\alpha_1|+|\alpha_2|+|\alpha_5|+|\alpha_6|$, in order to reduce the error when applied to non-linear problems, and we have chosen the following solution
{%\small
\begin{equation} \label{eq:coefsD13}
	\begin{array}{lll}
	a_1=0.06600649238911016, &
	a_2=-0.08393676964317165, &  \\
	a_3=0.35307044388088427, & a_4=1-2(a_1+a_2+a_3), & \\
	b_1=0.2992323094645197, &
	b_2=-0.0835865697375642, &   b_3=\frac12-(b_1+b_2).
\end{array}
\end{equation}
}
with $b_7 = 0,\ a_{8-i}=a_i,\ b_{7-i}=b_i,\  i =1,2,3$ which, as a D-splitting method requires $2s-1=13$ evaluations per step, and the corresponding method is denoted as D$_{13}4(6)$.
}

 The symmetry condition (\ref{eq.symmetry}) 
 corresponds to the following conditions
% is equivalent to the condition $\alpha_3=-\alpha_4$ suffices for a 6th-order method for linear problems after averaging, leaving 2 free parameters to reach even smaller values of the leading error ${\cal E}_4$. Alternatively, with the three free parameters it is possible go fulfill the conditions
 \[
\alpha_1= -\alpha_6, \qquad 
\alpha_2=-\alpha_5, \qquad 
\alpha_3=-\alpha_4 \qquad 
 \]
 which makes that, after the averaging, a 6th-order scheme is obtained. Since our composition has three free parameters, these conditions can be fulfilled. We have obtained an scheme to check that it corresponds to a method of order 6 after averaging. However, the best scheme we have obtained with this number of stages does not improve the performance already obtained with the D-splitting method based on the 10-stage 6th-order splitting method from \cite{blanes02psp}, and the search of new improved high order methods is under investigation. 

\subsection{D-splitting methods as pseudo-Lie-group RK methods}

It is well known that splitting methods preserve most qualitative properties of the exact solution if the system is split into parts which preserve those properties. In the following, we prove that for linear systems, D-splitting methods obtained from an splitting methods of order $p$ preserves the qualitative properties up to order $2p+1$ and they can be referred as a pseudo-geometric integrators. We prove this result for Lie-group methods.

Let us consider the linear equation
\begin{equation} \label{eq:linear}
 x'=Hx, \qquad x(0)=x_0\in\mathbb{C}^{N}
\end{equation}
where $H\in\mathbb{C}^{N\times N}$ is an element of a given Lie algebra. 
\begin{equation} \label{alg1}
	o_N(\mathbb{C},J) = \{ \Omega \in \mathbb{C}^{N \times N} \, : \,  \Omega^* J + J \Omega = 0 \},
%	o_J(N) = \{ \Omega \in \mathbb{C}^{N \times N} \, : \,  \Omega^* J + J \Omega = 0 \},
\end{equation}  
where $J$ is some constant matrix in $\mathrm{GL}_N(\mathbb{C})$, the group of all $N \times N$ non-singular complex matrices. In that case,
the fundamental matrix of \eqref{eq:linear}, $X(t)=\e^{tH}$, evolves in the Lie group
\begin{equation} \label{group1}
	\mathrm{O}_N(\mathbb{C},J) = \{ X \in \mathrm{GL}_N(\mathbb{C}) \, : \, X^* J X = J \},
%	O_J(N) = \{ X \in \mathrm{GL}_N(\mathbb{C}) \, : \, X^* J X = J \}
\end{equation}
whose associated Lie algebra is precisely $o_N(\mathbb{C},J)$. Relevant
examples of this class are the unitary and orthogonal group (when $J=I$, the identity matrix), the symplectic group (when $J$ is the basic
symplectic matrix) and the Lorentz group (corresponding to $J = \mathrm{diag}(1,-1,-1,-1)$).

Notice that 
\[
  H^*J=-JH\qquad \Rightarrow \qquad (H^{k})^*J=(-1)^kJH^{k}
\]
so, if $k$ is odd then $H^k$ is also an element of the Lie algebra. Then, if $F(x)$ is an odd analytical function in $x$, $F(-x)=-F(x)$, then $F(H)\in o_N(\mathbb{C},J)$.

\begin{theorem}
 An splitting method of order $p$ which is used as a D-splitting method in the duplicated space to solve eq. \eqref{eq:linear} is a pseudo-Lie group integrator which preserves the Lie group structure up to order $2p+1$.
\end{theorem}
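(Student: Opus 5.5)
The plan is to exploit the linearity of \eqref{eq:linear} to write one step of the D-splitting method \eqref{eq:2N_Splitting} as a $2\times 2$ block matrix whose entries are polynomials in $z=hH$. With $f(x)=Hx$, the stage $v_i=v_{i-1}+ha_iHu_{i-1}$ acts on $(u,v)^\top$ as the block matrix $\left(\begin{array}{cc} I & 0\\ a_i z & I\end{array}\right)$. The stage $u_i=u_{i-1}+hb_iHv_i$ acts as $\left(\begin{array}{cc} I & b_i z\\ 0 & I\end{array}\right)$. All blocks are polynomials in the single matrix $z$, so they commute. Hence the product $M(z)=\left(\begin{array}{cc} m_{11} & m_{12}\\ m_{21} & m_{22}\end{array}\right)$ can be treated as a $2\times2$ matrix over the commutative ring of polynomials in $z$. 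We then record two structural facts.
\begin{itemize}
\item[(i)] Each factor has determinant $1$, so $m_{11}m_{22}-m_{12}m_{21}=1$ exactly.
\item[(ii)] By induction on the factors, $m_{11},m_{22}$ are even in $z$ and $m_{12},m_{21}$ are odd in $z$.
\end{itemize}
The numerical solution is $x_{n+1}=R(z)x_n$ with $R=\frac12(m_{11}+m_{12}+m_{21}+m_{22})=P+Q$. Here $P=\frac12(m_{11}+m_{22})$ is even and $Q=\frac12(m_{12}+m_{21})$ is odd.

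Next we use the Lie-algebra identity noted before the theorem, $(z^k)^*J=(-1)^kJz^k$. It gives $R(z)^*J=J R(-z)=J(P-Q)$, and therefore
\[
R(z)^*JR(z)=J\,(P-Q)(P+Q)=J\,(P^2-Q^2).
\]
The statement thus reduces to the scalar-type identity $P^2-Q^2=1+{\cal O}(h^{2p+2})$. To prove it we compare $4(P^2-Q^2)$ with $4\det M=4$. A direct expansion gives
\[
4(P^2-Q^2)-4\det M=(m_{11}-m_{22})^2-(m_{12}-m_{21})^2
=\big[(m_{11}+m_{12})-(m_{22}+m_{21})\big]\big[(m_{11}-m_{12})-(m_{22}-m_{21})\big].
\]

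The key step is to show that each bracket is ${\cal O}(z^{p+1})$. Since $u_0=v_0=x_n$ and the splitting method has order $p$, both $u_s$ and $v_s$ are order-$p$ approximations. That is, $m_{11}+m_{12}=\e^{z}+\varepsilon_1(z)$ and $m_{21}+m_{22}=\e^{z}+\varepsilon_2(z)$ with $\varepsilon_j={\cal O}(z^{p+1})$. So the first bracket is $\varepsilon_1-\varepsilon_2={\cal O}(z^{p+1})$. For the second bracket we use the parity facts from (ii): $m_{11}(z)-m_{12}(z)=(m_{11}+m_{12})(-z)=\e^{-z}+\varepsilon_1(-z)$, and likewise $m_{22}-m_{21}=\e^{-z}+\varepsilon_2(-z)$. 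Their difference is again ${\cal O}(z^{p+1})$. The product of the two brackets is therefore ${\cal O}(h^{2p+2})$. This yields $R^*JR=J+{\cal O}(h^{2p+2})$, i.e.\ the group structure is preserved up to order $2p+1$, while the method itself is of order $p$ (or higher after averaging).

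The main obstacle I expect is precisely the parity argument in the last step. One must check carefully that the ordering of the factors does not spoil (ii): products of an even-diagonal/odd-off-diagonal matrix with another such matrix keep that pattern. One must also check that the order-$p$ property holds for the rows individually and not only for the average. Both seem to follow directly from the block structure and from the fact that a splitting method of order $p$ applied to \eqref{eq.D_Split} approximates $(x(h),x(h))$ in each component.
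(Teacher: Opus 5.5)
Your argument is correct, and it takes a genuinely different route from the paper. The paper passes to the logarithm of the step map, $\Psi=\exp\left(\begin{smallmatrix} a & b\\ c & -a\end{smallmatrix}\right)$ (via BCH, hence under convergence assumptions). It then introduces the odd function $\eta=(a^2+bc)^{1/2}$ of $hH$, uses the closed $\cosh/\sinh$ form of the exponential, and compares the averaged map with the group element $\e^{\eta}$. You never leave the polynomial entries of the product matrix. $\det M=1$ plays the role of tracelessness of the logarithm, and the symmetry $M(-z)=DM(z)D$ with $D=\mathrm{diag}(I,-I)$ plays the role of the parity of $a,b,c$. You obtain the exact identity $R(z)^*JR(z)=J\big(I+\frac14\delta(z)\delta(-z)\big)$ with $\delta=(m_{11}+m_{12})-(m_{21}+m_{22})$.

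Your route has several advantages. It is finite and purely algebraic, and it bounds exactly the group defect that the experiments measure. Since $\delta(hH)x_n=u_s-v_s$ is the embedded error estimate, it also makes the Remark after the theorem transparent: the defect depends on how closely $u_s$ and $v_s$ agree, not on the order of the average. It is also sharper at the last step. The paper's estimate $\eta=\frac12(b+c)+{\cal O}(h^{2p+1})$ only yields $x_{n+1}-\e^{\eta}x_n={\cal O}(h^{2p+1})$, and this is attained. For Lie--Trotter ($p=1$) one has $R(z)=1+z+\frac{z^2}{2}$ and $\cosh\eta=1+\frac{z^2}{2}$, so $\e^{\eta}=1+z+\frac{z^2}{2}+\frac{z^3}{8}+\cdots$. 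The ${\cal O}(h^{2p+2})$ defect claimed in the theorem therefore has to be recovered in the paper's route from the evenness of $R(-z)R(z)$, whereas your factorization delivers it directly. In exchange, the paper's route gives an explicit Lie-algebra element generating a nearby group element, which is closer to the ``pseudo-Lie-group'' viewpoint.

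One hypothesis you use silently: $R(z)^*J=JR(-z)$ requires real $a_i,b_i$ (and real $h$). The paper states this explicitly, and it matters because the paper also considers complex-coefficient compositions such as $a=\frac1{12}(3\pm i\sqrt3)$.
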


\begin{proof}

%Notice that $\sigma(H)\in\mathbb{R}$ and the solution is given by the unitary transformation $u(t)=\e^{-itH}u_0$. 
Working in the duplicated space corresponds to solve the system

\begin{equation} \label{eq.3.StrangNonAut2}
	\frac{d}{dt} \left(
	\begin{array}{c} y \\  z  \end{array} \right)
	=
	\underbrace{\left(\begin{array}{cc} 0 & H \\ 0 & 0  \end{array} \right)}_{A} \left(	\begin{array}{c} y \\  z  \end{array} \right)
	+
	\underbrace{\left(\begin{array}{cc} 0 & 0 \\ H & 0 \end{array} \right)}_{B} \left(	\begin{array}{c} v \\  w  \end{array} \right).
\end{equation}
If an splitting method with real coefficients of order $p$ is used to solve this problem we have (using e.g. the BCH formula under convergence conditions)
\begin{equation*}   \label{eq.Prod}
	\Psi_h^{[p]} =
	\e^{hb_s B} \e^{ha_s A}
	\cdots
	\e^{hb_1 B} \e^{ha_1A}
	= 
	\exp\left(\begin{array}{cc} a(hH) & b(hH)\\ c(hH) & -a(hH)  \end{array} \right),
\end{equation*}
where  $a$ is an even real function of $H$ and $b,c$ are odd real functions of $H$. For a method of order $p$ we have that
\[
a(x)={\cal O}(x^{p+1}), \qquad
b(x)=x+{\cal O}(x^{p+1}), \qquad c(x)=x+{\cal O}(x^{p+1})
\]
so, we can write
\[
a(hH)=hH\alpha(hH)), \qquad b(hH)=hH(I+\beta(hH)), \qquad c=hH(I+\gamma(hH))
\]
with $\alpha(hH)={\cal O}(h^{p})$ being an odd function of $hH$ and $\beta(hH)={\cal O}(h^{p}), \gamma(hH)={\cal O}(h^{p})$  both being even functions of $hH$. 
Let us now introduce the function 
\[
\eta=(bc+a^2)^{1/2}
=hH\Big((I+\beta)(I+\gamma)+\alpha^2\Big)^{1/2}
=hH\Big(I+\beta+\gamma+\beta\gamma+\alpha^2\Big)^{1/2}
\]
which is an odd function of $hH$ since $\beta+\gamma+\beta\gamma+\alpha^2$ is an even function of $hH$. In addition, taking into account the Taylor expansion of the square root function we have that
\begin{equation}   \label{eq.eta}
\eta=hH\Big(I+\frac12(\beta+ \gamma)+\frac12(\beta\gamma+\alpha^2)+{\cal O}(h^{2p})\Big)=\frac12(b+c)+{\cal O}(h^{2p+1}).
\end{equation}
On the other hand, we have that\footnote{For simplicity in the presentation we write $\frac{a}{\eta}$ instead of $a\eta^{-1}$ or $\eta^{-1}a$ since $a$ and $\eta$ are functions of $hH$ which commute.}
\begin{equation*}   \label{eq.Exp}
	\Psi_h^{[p]} =
	\exp\left(\begin{array}{cc} a & b\\ c & -a  \end{array} \right)
	= \left(\begin{array}{cc} \cosh(\eta)+\frac{a}{\eta}\sinh(\eta) & 
		\frac{b}{\eta}\sinh(\eta)\\ \frac{c}{\eta}\sinh(\eta) & \cosh(\eta)-\frac{a}{\eta}\sinh(\eta)  \end{array} \right).
\end{equation*}
 Notice that $\eta=hH+{\cal O}(h^{p+1})$ and it is an odd function of $H$ so, it is an element of the Lie algebra.
Then, one step of a D-splitting method corresponds to
\begin{equation*}   \label{eq.Exp2}
\left(
\begin{array}{c} y_s \\  z_s  \end{array} \right)
	= \left(\begin{array}{cc} \cosh(\eta)+\frac{a}{\eta}\sinh(\eta) & 
		\frac{b}{\eta}\sinh(\eta)\\ \frac{c}{\eta}\sinh(\eta) & \cosh(\eta)-\frac{a}{\eta}\sinh(\eta)  \end{array} \right)
		\left(
		\begin{array}{c} x_n \\  x_n  \end{array} \right),
\end{equation*}
and, taking into account \eqref{eq.eta}
%\[
%\eta=((hH+\beta h^{p+1})(hH+\gamma h^{p+1})-\alpha^2 h^{2p+2})^{1/2}=hH\Big(I+\frac12(\tilde \beta+\tilde \gamma)h^{p+1}+{\cal O}(h^{2p+1})\Big)=\frac12(b+c)+{\cal O}(h^{2p+1})
%\]
%where $\beta=h\tilde \beta, \gamma= h\tilde \gamma$, 
%we have that
\[
x_{n+1}=\frac12(y_s+z_s)=(\cosh(\eta)+\frac{b+c}{\eta}\sinh(\eta))x_n		
= \e^{\eta}x_n +{\cal O}(h^{2p+2}),
\]
so, $x_{n+1}$ is an approximation of order $p$ (since $\e^{\eta}=\e^{hH}+{\cal O}(h^{p+1})$) that preserves the Lie group structure up to order $2p+1$ since $\e^{\eta}\in \mathrm{O}_N(\mathbb{C},J)$.

\end{proof}

{\sc Remark:} This result shows that, in general, the order at which a D-splitting method preserves the Lie-group structure depends on the order, $p$, of the splitting method, but not on the order reached after the average, which can be higher than $p$.

\section{Numerical examples}

We analyse the performance of D-splitting methods built both from already existing splitting methods as well as the new tailored method on some linear and non-linear ODEs and PDEs.

The following methods are considered for the numerical examples:
\begin{itemize}
	\item  RK$_4$4: The standard 4-stage fourth order RK method (as a standard reference method).
	\item RK$_7$6: The 7-stage sixth order RK method with coefficients given in \cite[p. 194]{butcher08nmf} (used as a reference standard method).
	\item  RK$^{[2R]}_5$4: The 5-stage fourth order 2R-storage RK method from \cite{kennedy00lse} denoted as RK4(3)5[2R+]C.
%	\item RK$^{[2R]}_9$5: The 9-stage fifth order 2R-storage RK method from \cite{kennedy00lse} denoted as RK5(4)9[2R+]S.	
%	\item D$_5$4: The 5-stage 4th order D-splitting methods \eqref (obtained from a 2-stage second order splitting methods with complex coefficients)
	\item  D$_{13}$4: The 13-stage 4th order D-splitting method  obtained from the 6-stage 4th order splitting methods with coefficients given in \eqref{eq:coefsBM4}.
	\item  D$_{13}$4(6): The new 13-stage 4th order D-splitting method with coefficients given in \eqref{eq:coefsD13},  which is a 6th-order order method for linear problems. 
\end{itemize}

\paragraph{1D wave equation}
A simple problem to test D-splitting methods as 2$N$-storage methods is the following linear hyperbolic equation with periodic boundary conditions \cite{carpenter94fo2}:
{\small
\begin{align*}
  \frac{\partial u}{\partial t}+\frac{\partial u}{\partial x} = 0,\quad &0\le x \le 1,\ t\ge 0,\\
  u(0,t)=u(1,t)=-\sin(8\pi t),\quad &t\ge 0,\\
  u(x,0)=\sin(8\pi x),\quad &0\le x \le 1,
\end{align*}
}
for which the exact solution is known to be $ u(x,t)=\sin(8\pi(x-t)). $
%\begin{equation*}
%  u(x,t)=\sin(2\pi(x-t)),\quad 0\le x \le 1,\ t\ge 0.
%\end{equation*}
To integrate this problem numerically, we begin by partitioning the spatial interval into
$N$ subintervals of length $\Delta x=1/N$.
The numerical solution is then represented by the vector
$\tilde{u}=(u_0,\dots,u_{N-1})$, where $u_i\simeq u(x_i,t)$ with $x_i=i\Delta x$.
If a Fourier spectral collocation method is used, one obtains a
system of $N$ ODEs of the form 
$\tilde{u}'=-A\tilde{u}=f(\tilde{u}), $ 
%
%\begin{equation*}
%  \frac{d}{dt}\tilde{u}=-A\tilde{u}=f(\tilde{u}),
%\end{equation*}
where $A$ is a differentiation matrix.
Its action can be computed as
$A\tilde{u}=\mathcal{F}^{-1}D_A\mathcal{F}\tilde{u}$,
where $\mathcal{F}$ and $\mathcal{F}^{-1}$
denote the forward and inverse discrete Fourier transforms, respectively, and
$D_A$ is a diagonal matrix \cite{trefethen00smi}.
%
% To perform the simulations, we consider the following methods:
% %. On the one hand, we use classical Runge–Kutta (RK) methods: 
% the two-stage second order Heun's RK method (RK2), the well-known four-stage fourth-order RK method (RK4), the five-stage 2$N$-storage method (KCL4) taken from \cite{kennedy00lse} (denoted as RK4(3)5[2R+]C), the Strang splitting method (\ref{eq.Strang}) (S2), the previous splitting methods (BM4) and (BM6) from \cite{blanes02psp}, 
% %of fourth and sixth order, respectively, 
% and our new sixth-order method (2N-S6). When implemented as 2$N$-storage methods, the splitting schemes require 3, 13, 21, and 13 stages, respectively.

%\begin{figure}[!h]
%  \begin{center}
%    \subfloat[]{
%      \label{fig:ona-1}
%      \includegraphics[width=7cm]{./img/error_ona.pdf}}
%    \subfloat[]{
%      \label{fig:ona-2}
%      \includegraphics[width=7cm]{./img/error_unitarietat.pdf}}
%    \caption{Error in the solution (a) and in the unitarity (b) for the one-dimensional wave equation problem with final time $t_f = 50$ and %$N = 32$.}        \label{fig:ona}
%  \end{center}
%\end{figure}
%
\begin{figure}[!h]
	\centering
	\subfloat{
		\includegraphics[width=5.5cm]{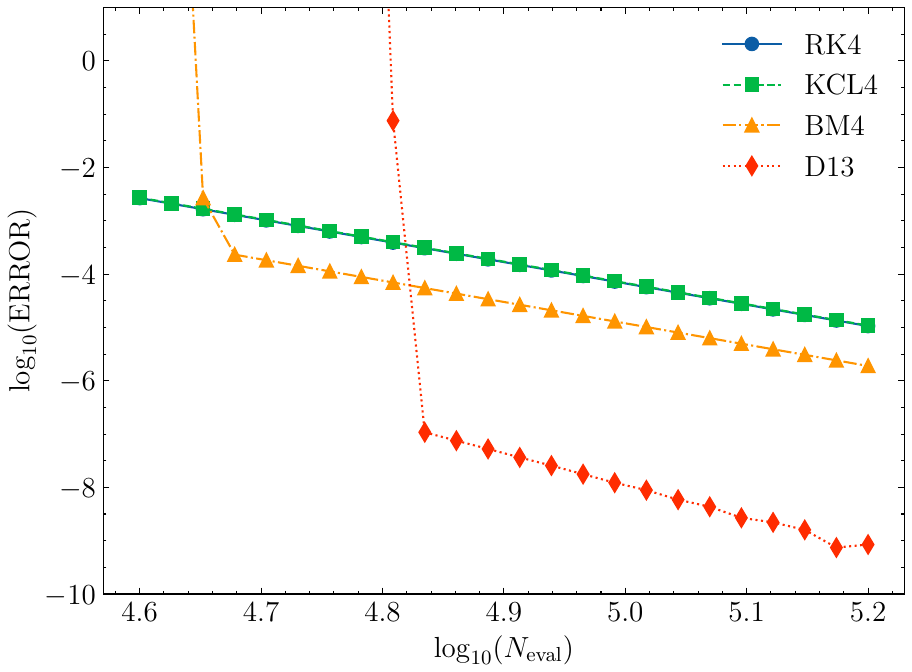}}
	\subfloat{
		\includegraphics[width=5.5cm]{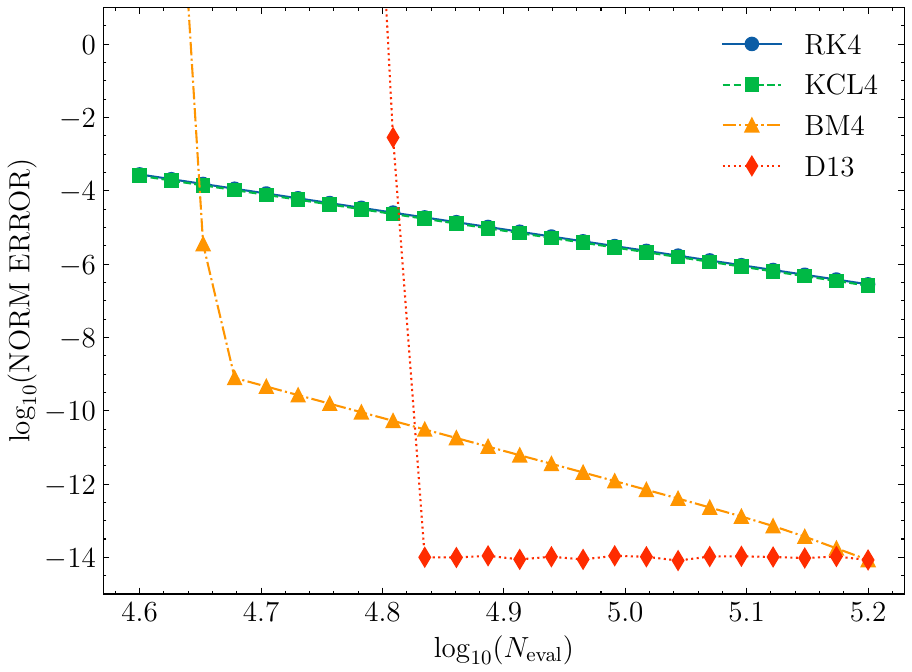}}
	\caption{Error in the solution (left) and in the mass (right) for the one-dimensional wave equation problem at final time $t_f = 50$ and $N = 128$.}    
	\label{fig:ona}
\end{figure}
The integrations are performed up to the final time $t_f = 50$, as in \cite{kennedy00lse}, using a spatial discretization with $N =128$ grid points. Figure~\ref{fig:ona} shows the results obtained where both plots display, on a logarithmic scale, the error versus computational cost, measured in terms of the number of function evaluations.
The left panel shows the error in the wave function with respect to the exact solution, while the right panel depicts the error in the conservation of mass:
% the wave function norm:
{\small
\begin{equation*}
  \text{ERROR: }\lVert\tilde{u}(t_f)-u(t_f)\rVert /\lVert u(t_f)\rVert, \qquad 
  \text{NORM ERROR: }
  (|\lVert\tilde{u}(t_f)\rVert -\lVert\tilde{u}(t_0)\rVert |)/\lVert\tilde{u}(t_0)\rVert,
%  (|\lVert\tilde{u}(t_f)\rVert_{L^2} -\lVert\tilde{u}(t_0)\rVert_{L^2} |)/\lVert\tilde{u}(t_0)\rVert_{L^2},
\end{equation*}
}
where $\lVert u\rVert=\sqrt{u^{\top}u}$.
% and $\lVert u\rVert_{L^2}=\Delta x\sum_{i=0}^{N-1}u_i^2$.
Some methods exhibit a more restrictive stability range; in this regard, it is worth emphasizing the favorable stability properties of BM4 (D$_{13}4$).
Nevertheless, within their respective stability regions, the D-splitting methods demonstrate improved efficiency compared to classical
RK schemes. In particular, the observed slopes for the norm conservation error indicate orders higher than the formal order of the methods.
For the D$_{13}4(6)$ method, whenever stability is maintained, the error reaches machine precision.

Next, in order to illustrate the performance of the new methods on linear problems which evolve on Lie-groups as well as to observe the order of accuracy in the preservation of the qualitative properties, we consider the numerical integration of the linear problem
\[
X'=HX, \qquad \qquad \qquad X(0)=I_N
\] 
where $I_N$ is the identity matrix of dimension $N$.
We take $N=100$, integrate until $t_f=10$ and measure the error in the solution at the final time, $X(t_f)=\e^{t_f H}$ versus the computational cost measured as the number of matrix-matrix products for different choices of the time step. The error in the solution and in the preservation of the Lie-group structure are measured as follows
\begin{equation*}
	%	\text{ERROR: }
	{\cal E}_1= \lVert\tilde{X}(t_f)-X(t_f)\rVert /\lVert X(t_f)\rVert, \qquad  \qquad  \qquad 
	%	\text{SYMPL. ERROR: }
	{\cal E}_2= 
	|\lVert\tilde{X}(t_f)^TJ\tilde{X}(t_f) -J \rVert. %-\lVert\tilde{u}(t_0)\rVert |)/\lVert\tilde{u}(t_0)\rVert,
	%  (|\lVert\tilde{u}(t_f)\rVert_{L^2} -\lVert\tilde{u}(t_0)\rVert_{L^2} |)/\lVert\tilde{u}(t_0)\rVert_{L^2},
\end{equation*}
Notice that for, this problem, the storage requirement corresponds to store full matrices, which is equivalent to work on a problem with vectors of dimension $N^2=10^4$. In particular, we consider the following symplectic and the unitary problems:

\begin{figure}[!h]
	\begin{center}
		\includegraphics[height=15cm]{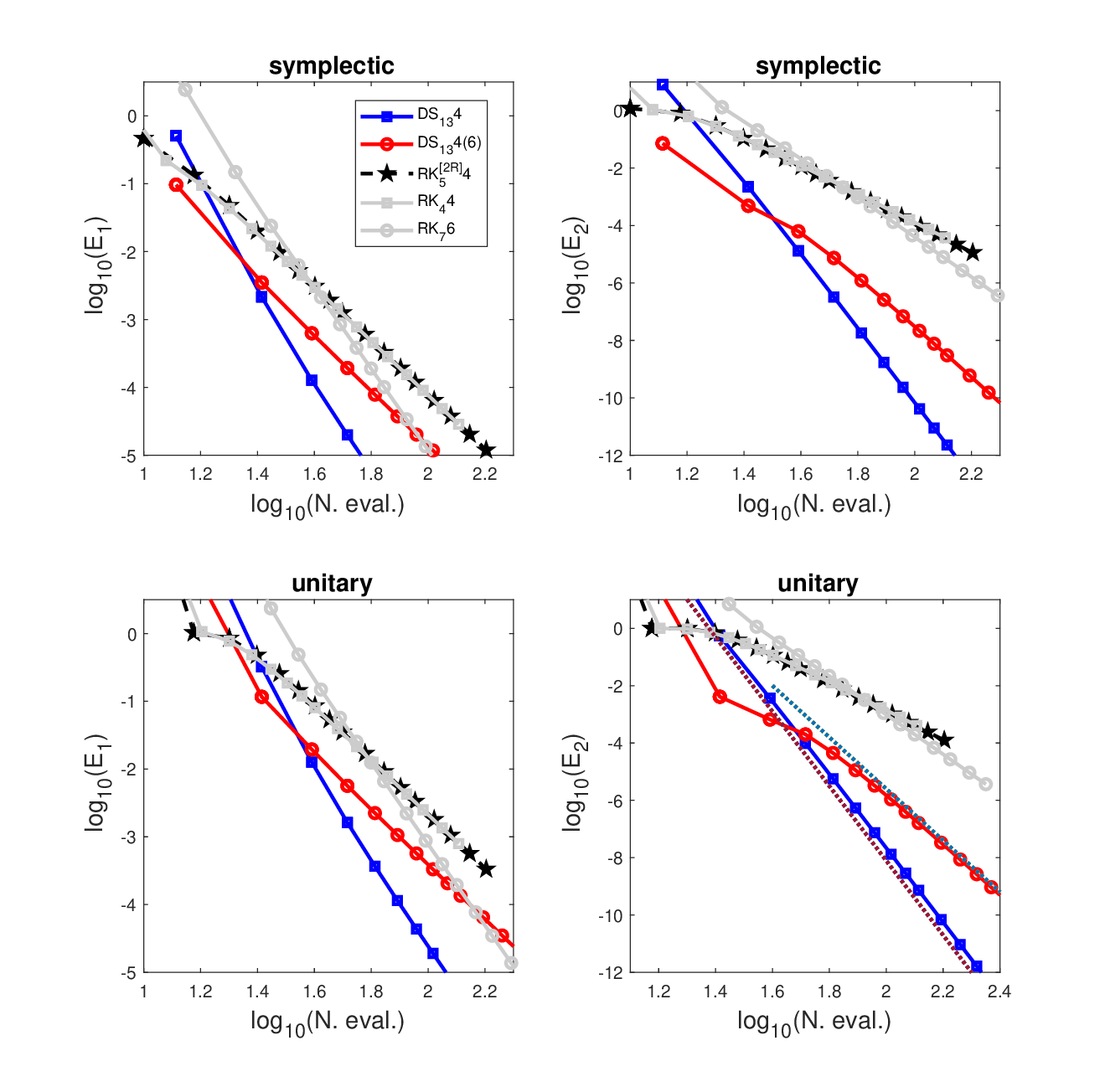}
		\caption{Error in the matrix solution, ${\cal E}_1$, at the final time $t_f=10$, $\e^{t_fH}$ (left panels) and error in the preservation of the Lie-group structure at the final time, ${\cal E}_2$, (right panels) for the symplectic and unitary examples where the matrix $H$ is given by \eqref{H:symplectic} and \eqref{H:unitary}, respectively, versus the computational cost measured as the number of matrix-matrix products. Dotted lines in the bottom right panel are reference curves with slopes corresponding to orders 9 and 13.}
		\label{fig:SymplecticUnitary}
	\end{center}
\end{figure}

\paragraph{Linear symplectic problem}
We consider the case in which
\begin{equation} \label{H:symplectic}
H=JS, \qquad  \qquad  \qquad J=\left(\begin{array}{cc} 0_d & I_d\\ -I_d & 0_d  \end{array} \right)
\end{equation}
$N=2d$, $0_d,I_d$ are the zero and identity matrices of dimension $d$, and $S\in\mathbb{R}^{N\times N}$ is a symmetric positive definite matrix which we build as follows 
\[
\displaystyle S=  \frac1{\|V^TV\|}V^TV
\] 
and $V$ a random matrix obtained in {\sc Matlab} with seed $\mathtt{rng(1)}$ and  $\mathtt{V=randn(d)}$. 
Fig~\ref{fig:SymplecticUnitary}, top figures show the results obtained. We observe that the new D-splitting methods outperform the already existing 2R-storage method (with slightly reduced stability) and are also superior to the standard RK methods. In the right panel we show the error in the preservation of symplecticity where the superiority of D-splitting methods much clear and the slopes of the curves are approximately as a method of order 9 for the scheme D$_{13}4$ and of order 13  for the scheme D$^{[6]}_{13}4(6)$ which agree with the expected results.

\paragraph{Linear unitary problem}
We consider now the case in which
\begin{equation} \label{H:unitary}
H=\frac1{\|A+iB\|}(A+iB), \qquad  \qquad  \qquad J=I_N
\end{equation}
where $A$ is a random real skew-symmetric matrix and $B$ is a random real symmetric matrix obtained as follows
\[
\mathtt{rng(1)};  \qquad \mathtt{A=randn(N)};  \qquad  \mathtt{A=A'-A};  \qquad  \mathtt{B=randn(N)};   \qquad \mathtt{B=B'+B};
\]

Fig~\ref{fig:SymplecticUnitary}, bottom figures show the results obtained, where we observe similar results as for the symplectic case. In the bottom right panel we include two reference dotted lines of slope corresponding to schems of order 9 and 13, in agreement to the preservation of unitarity by D-splitting methods built from splitting methods of order 4 and 6, respectively.
%We observe that the new D-splitting methods outperform the already existing 2R-storage method (with slightly worst stability) and are also superior to the standar RK methods. In the right panel we show the error in the preservation of symplecticity where the superiority of D-splitting methods much clear and the slopes of the curves are approximately as a method of order 9 for the scheme DS$_{13}4$ and of order 13  for the scheme DS$_{13}4(6)$ wich agree with the expected results.

%The Taylor method with the Horner's algorithm can be used as a 2$N$-storage method, being one of the most efficient methods in the literature to solve this problem since it attains order $m$ with only $m$ products while storing only two vectors in memory \cite{almohy11ctaf}. 
%On the other hand, in \cite{} [Gray-Manolopoulos] the authors presented a set of $m$-stage splitting methods of order $m$ for $m=2,4,6,8,10$ and 12, for the linear Schr\"odinger, which can also be used as D-splitting methods on the duplicated space at a cost of $2m$ products per step.. 

We now analyse the performance of D-splitting methods on  some non-linear ODEs and PDEs and their preservation of the qualitative properties.

\paragraph{Two body Kepler problem}
Although the Kepler problem can be split into two integrable parts, we consider here its classical formulation in order
to apply splitting methods in a $2N$-storage framework:
\begin{equation*}
 (q',p')^\top=\left(p, -q/r^3\right)^\top,\text{with } r=\lVert q\rVert, \quad q=(q_1,q_2), p=(p_1,p_2), \text{and } q_i,p_i\in\mathbb{R},
\end{equation*}
with initial conditions
$  q_1(0)=1-e, q_2(0)=p_1(0)=0, \text{and } p_2(0)=\sqrt{(1+e)/(1-e)},
$
%\begin{equation*}
%  q_1(0)=1-e, q_2(0)=p_1(0)=0, \text{and } p_2(0)=\sqrt{(1+e)/(1-e)},
%\end{equation*}
which yield an elliptical trajectory of period $2\pi$,  eccentricity $e$ and energy $E_0 = -1/2$.
We take  $e = 0.8$ and integrate until the final time $t_f = 2000$.
For the numerical integration, we employ the two fourth-order RK methods introduced previously,
together with the D-splitting methods D$_{13}4$ and  D$_{13}4(6)$.
The step sizes are chosen so that all methods require 520000 force evaluations.
%have the same computational cost.
Figure~\ref{fig:kep} shows the error in energy along the propagation for these three methods.
%The simulations are performed with eccentricity $e = 0.8$ and final time $t_f = 2000$.
The results show that the RK methods exhibit a linear growth of the energy error from the outset,
whereas both D-splitting methods initially preserve the energy error before transitioning to a linear growth regime.
This behavior is characteristic of pseudo-symplectic methods.

\begin{figure}[!h]
  \begin{center}
    \includegraphics[height=5.5cm]{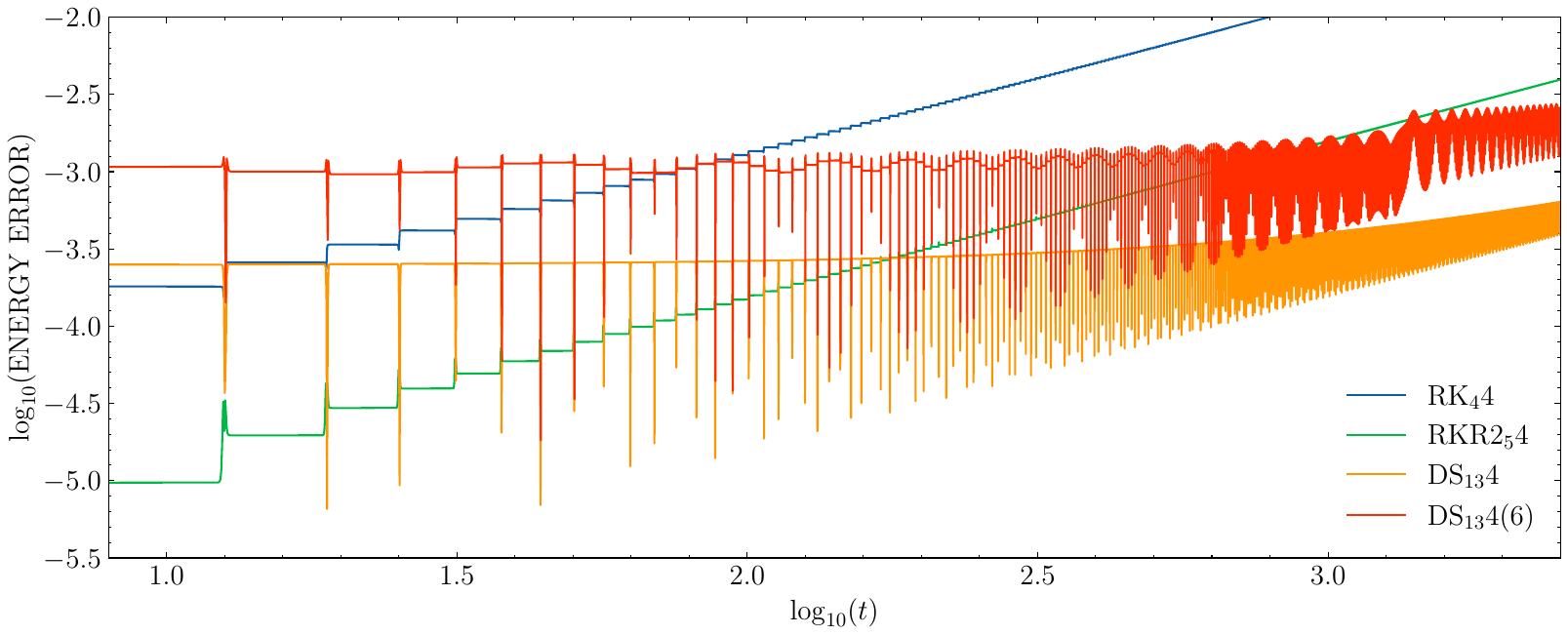}
    \caption{Propagation of the energy error in the Kepler problem with $e = 0.8$ and $t_f = 2000$ for three integrators, the third exhibiting pseudo-symplectic behavior. All three simulations were performed under an identical computational cost.}
    \label{fig:kep}
  \end{center}
\end{figure}

% \paragraph{Example 1} 
% The wave equation
% \paragraph{Example 2} Some Hamiltonian systems.

% \begin{figure}
% 	\centering
% 	%		\input{figures/ex1.tiz}
% %	\includegraphics[width=\textwidth]{Fig1_RatDecomp.eps}
% 	\caption{}
% 	\label{fig:}
% \end{figure}

\paragraph{The Gross-Pitaevskii equation} As a final example we consider the one-dimensional Gross-Pitaevskii equation, which is a non-linear Schr\"odinger equation given by

\begin{equation} \label{Schr1}
	i \varepsilon  \frac{\partial}{\partial t} \psi (x,t) = -\frac{\varepsilon^2}{2 \mu} \frac{\partial^2 \psi}{\partial x^2}(x,t) + \hat{V}(x) \psi(x,t) + \kappa_2|\psi(x,t)|^2 \psi(x,t), \qquad x \in [-L,L],
\end{equation}
Following the numerical example presented in \cite{bao03nso}, we take periodic boundary conditions in $x \in [-L,L],$ i.e. $\psi(-L,t) = \psi(L,t)$ for all $t$ with $L=16$, $\mu=1$, $\varepsilon=0.1$, $V(x)=\frac12 x^2$ and $\kappa_2=1.2649$ with initial conditions $\psi (x,0)=\rho\e^{-x^2/(2\varepsilon)}$ and $\rho$ a normalizing constant. The spatial interval is divided into $N=512$ intervals of length $\Delta x=2L/N$ leading to a system of non-linear ODEs
\[
  {\bf u}'={\bf f}({\bf u}), \qquad {\bf u}(0)={\bf u}_0
\] 
where ${\bf u}_i(t)\simeq \sqrt{\Delta x}\ \psi (x_i,t)$ and $\|{\bf u}(0) \|=1$. 
 We integrate along the time interval $t\in[0,2]$ and measure the error in the wave function, ${\cal E}_1$ (the exact solution of the spatially discretised problem is computed numerically to sufficiently high accuracy) and in the norm, ${\cal E}_2$. The action of the Laplacian is computed with FFTs and we measure the error versus the cost given by the required number of FFTs.

We have repeated the numerical experiments for $\kappa_2=0.12649/50$ (a weaker contribution from the non-linear part) and initial conditions $\psi (x,0)=\rho\e^{-(x-4)^2/(2\varepsilon)}$ with $\rho$ a normalizing constant.
% to avoid to stay too close to the ground state. 
Figure~\ref{fig:SchrodGP} shows the results obtained. We observe that the new D-splitting methods provide in this case more accurate results and behave as higher order methods in the preservation of unitarity, similarly to the linear case.

\begin{figure}[!h]
	\begin{center}
		\includegraphics[height=13cm]{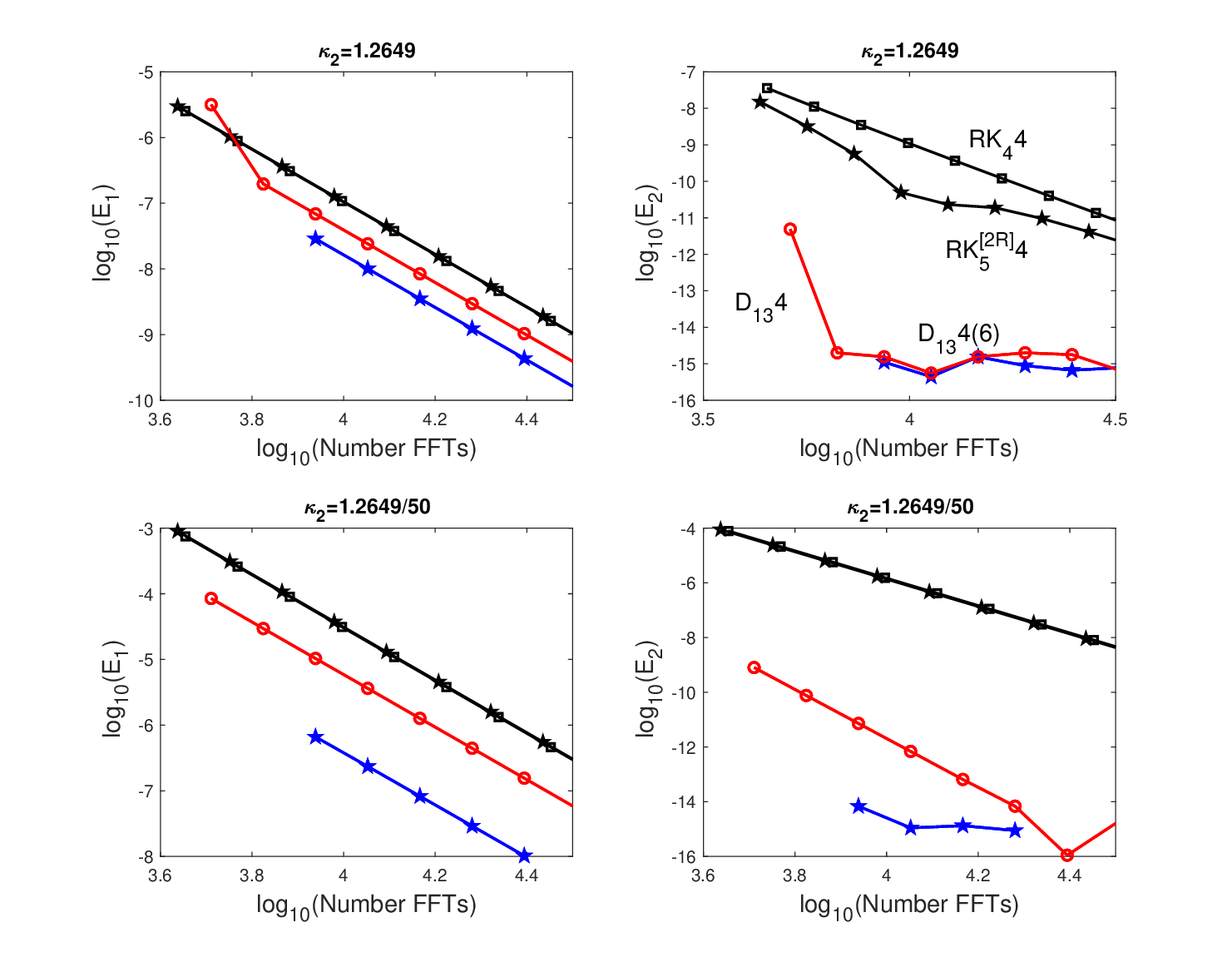}
		\caption{Error in the solution of the semidiscretised Gross-Pitaevskii equation
			at the final time $t_f=2$ (left panels) and in the preservation of the norm (right panels) versus the computational cost measured as the number of FFTs. The value of the constant, $\kappa_2$, for the non-linear term is reduced in the bottom figures in order to make the system as near to linear.}
		\label{fig:SchrodGP}
	\end{center}
\end{figure}

\section{Conclusions}
%\paragraph{Conclusions}
Beyond problems that can be split into several parts,
  splitting methods can also be applied to 
  %\blau
  {more general} problems.
  %\blau
  {In particular, they allow, in the duplicated phase space,} the construction of 2$N$-storage embedded explicit RK methods
  of arbitrarily high order while remaining very easy to implement.
  Currently, the literature contains a large number of splitting
  and composition methods that can be applied as 2$N$-storage RK methods.
%
%Splitting methodsallow to build 2$N$-storage RK methods at any order, and are \blau{very} simple to implement.
%
%\blau{
  Furthermore, the results presented in this work open several promising research directions, among which the following deserve special attention:
  \begin{itemize}
  \item We have also shown that it is possible to build new D-splitting methods that, after the averaging, allow to reach a higher order than the order of the splitting method. This requires a deeper analysis in the order conditions based on the algebraic structure of the Lie operators in the duplicated phase space, and this study is under investigation.
    We have also observed that on Hamiltonian systems, the order of the pseudo-symplecticity can be reduced when considering schemes where $u_s, v_s$ are of lower order than their average, and this also deserves further investigation and the results will be published elsewhere.
    \item One can also build methods of order $p$ for general non-linear problems which, for linear problems, are of a higher order, $q>p$, with improved stability,
    % Splitting methods on the extended phase space can 
or one can use the processing technique \cite{blanes24foe,blanes99siw} (if the numerical solution is not required at each step).
They can also be considered as pseudo-geometric 2$N$-storage methods when the vector field has a particular algebraic structure, and new pseudo-symplectic and pseudo-unitary methods will be built 
% and analysed for the numerical integration of classical and quantum mechanic problems 
and published elsewhere.
\item D-splitting methods can be used as embedded schemes in a variable time-step setting. Once $u_s,v_s$ are obtained, one can compute $u_s:=u_s-v_s$ and to use $\|u_s\|$ as an estimation of the error to chose the following time step. If it is accepted, we take $x_{n+1}=v_s+\frac12 u_s$. Otherwise, if  $\|u_s\|$ is too large and the step is rejected, one can recover $x_n$ by first  evaluating $u_s:=u_s+v_s$ and then integrating backward with the inverse of the method (the same algorithm but in the opposite order and changing the sign of the coefficients). If rejections occur only occasionally, this is more economical than keeping a third storage register at each step.
Obviously, an understimation of the error can occur when $\|u_s-v_s\|$ is smaller than the true error. It is possible to obtain a lower order estimator from the intermediate results in the splitting method with a third storage register \cite{blanes19sac}, 
 as in the 2R methods. %This feature may also be exploited to construct adaptive time-stepping 2$N$-storage methods.
\end{itemize}
}
Finally, the results from this letter 
%\blau
{can be} extend to the non-autonomous case, $x'=f(t,x)$, as follows
\begin{equation*} \label{eq.3.StrangNonAut2}
	\begin{array}{ll}
		u'= f(v_t,v), \quad u_t'=1, \qquad\qquad &u(t_0)=x_0, \ u_t(0)=t_0\\
		v'= f(u_t,u), \quad v_t'=1, \qquad\qquad &v(t_0)=x_0, \ v_t(0)=t_0,
	\end{array}
\end{equation*}
i.e. taking the time as two new coordinates \cite{blanes19otc} and using the coefficients $c_i^{[a]},c_i^{[b]}$ from the Butcher tableaux \eqref{RK-tablero}.
% Then, the D-splitting methods can also be used as pseudo-geometric methods to numerically solve the linear non-autonomous problem
%\[
%x'=H(t)x,
%\]
%where, in the case $H(t)$ evolves smoothly on time, the problem can be considered as nearly linear so, tailored methods of order $p$ which are of order $q>p$ for the linear case could be of great interest.

%There exist highly efficient splitting methods which are optimised for the separable linear problem
%\begin{equation} \label{eq.SyplecticSplit}
%	\frac{d}{dt} \left(
%	\begin{array}{c} q \\  p  \end{array} \right)
%	=
%	\underbrace{\left(\begin{array}{cc} 0 & M\\ 0 & 0  \end{array} \right)}_{A} \left(	\begin{array}{c} q \\  p  \end{array} \right)
%	+
%	\underbrace{\left(\begin{array}{cc} 0 & 0 \\ N & 0 \end{array} \right)}_{B} \left(	\begin{array}{c} q \\  p  \end{array} \right).
%\end{equation}
%for the case in which $N=-M$ and $N$ is a symmetric real matrix where upper and lower bounds to the eigenvalues are known. These methods can also be used as $D$-splitting methods if one takes $M=N=H$ and we will analyse their performance and stability as well as if it is possible to build new improved methods.

\section*{Acknowledgements}
%\paragraph{Acknowledgements}
The work of both authors is supported by Ministerio de Ciencia e Innovación (Spain) through project PID2022-
136585NB-C21, MCIN/AEI/10.13039/501100011033/FEDER, UE.  AE-T acknowledge financial support by Universitat Jaume I
through project UJI-2025-06.

\end{document}